\theoremstyle{plain}
\newtheorem{theorem}[equation]{Theorem}
\newtheorem{lemma}[equation]{Lemma}
\newtheorem*{theoremA}{Theorem A}
\newtheorem*{theorem1}{Theorem 1}
\theoremstyle{definition}
\newtheorem{remark}[equation]{Remark}
\newtheorem{question}[equation]{Question}
\newcommand{\IR}{\mathbb{R}}
\title{Aspherical $4$-Manifolds, Complex Structures, and Einstein Metrics}
\author{\small{Michael Albanese} \\ \scriptsize{University of Waterloo}\\ \footnotesize{\textsf{m3albane@uwaterloo.ca}}\and \small{Luca F. Di Cerbo}\\ \scriptsize{University of Florida}\\ \footnotesize{\textsf{ldicerbo@ufl.edu}}}
\date{}
\begin{document}

\maketitle

\begin{abstract}
We show that extended graph $4$-manifolds with positive Euler characteristic cannot support a complex structure. This result stems from a new proof of the fact that there is no compact complex surface which is homotopy equivalent to a closed real-hyperbolic $4$-manifold. Finally, we construct infinitely many extended graph 4-manifolds with positive Euler characteristic which support almost complex structures.
\end{abstract}

\vspace{12cm}

\tableofcontents

\vspace{1cm}


\section{Introduction}

There are many problems in geometry and topology for which dimension four is an extremely difficult case. One exception to this phenomenon is determining the existence (or non-existence) of a complex structure. Aside from the necessary condition of admitting an almost complex structure, our knowledge of compact complex surfaces via the Kodaira-Enriques classification is a powerful tool in many cases. An example is the following classical non-existence theorem for complex structures on $4$-manifolds with constant negative sectional curvature.

\begin{theorem}[Kotschick]\label{Kot}
There is no compact complex surface homotopy equivalent to a closed real-hyperbolic $4$-manifold.
\end{theorem}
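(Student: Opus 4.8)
The plan is to combine the Kodaira--Enriques classification with a harmonic--map rigidity argument. Write $M = \IH^4/\Gamma$ for the closed real-hyperbolic $4$-manifold, and suppose toward a contradiction that there is a compact complex surface $X$ together with a homotopy equivalence $h\colon X \to M$. First I would record the two metric invariants of $M$ that $h$ must preserve. By Chern--Gauss--Bonnet the hyperbolic metric has $\chi(M) = \tfrac{3}{4\pi^2}\operatorname{Vol}(M) > 0$, while conformal flatness ($W \equiv 0$) together with the signature formula $\sigma = \tfrac{1}{12\pi^2}\int_M(|W^+|^2 - |W^-|^2)\,dV$ gives $\sigma(M) = 0$. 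Since the Euler characteristic and the intersection form on $H^2$ are homotopy invariants, $\chi(X) = \chi(M) > 0$ and $\sigma(X) = 0$ (a possible orientation reversal is harmless because $\sigma(M)=0$). Thus, as a complex surface, $X$ satisfies $c_2(X) = \chi(X) > 0$ and $c_1^2(X) = 2\chi(X) + 3\sigma(X) = 2\chi(X) > 0$.

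Next I would pin down $X$ in the classification. Because $M$ is aspherical, so is $X$; in particular $\pi_2(X) = 0$. Any rational curve $C \cong \IP^1 \subset X$ would lift to the contractible universal cover and hence represent the zero class in $H_2(X;\IQ)$, which is impossible for a curve of nonzero self-intersection such as a $(-1)$- or $(-2)$-curve, and equally impossible for the fibre of a ruling. So $X$ is minimal and is neither rational nor ruled. Running through the Kodaira--Enriques list, every minimal surface of Kodaira dimension $\le 1$ that is not rational or ruled has $c_1^2 = 0$, and every minimal class VII surface has $c_1^2 \le 0$; since $c_1^2(X) > 0$, the surface $X$ must be of general type. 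In particular $X$ is projective, hence Kähler, and, having no $(-2)$-curves, it already equals its own canonical model.

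Finally comes the crux. Since $M$ has negative sectional curvature, the Eells--Sampson theorem provides a harmonic map $f\colon X \to M$ homotopic to $h$. Because $\IH^4$ has nonpositive complexified sectional curvature, Sampson's theorem on harmonic maps out of compact Kähler manifolds applies and forces $\operatorname{rank}_{\IR} df \le 2$ at every point of $X$; this is exactly the setting in which Carlson and Toledo studied lattices in $\mathrm{SO}(n,1)$. On the other hand $f$ has degree $\pm 1$, so
\[
\int_X f^*\,dV_M = \pm\,\operatorname{Vol}(M) \neq 0,
\]
which means $f^* dV_M$ is nonzero at some point and hence that $df$ is an isomorphism, of real rank $4$, there. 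This contradicts the rank bound, proving the theorem.

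The step I expect to be the main obstacle is this last one: verifying that the real-hyperbolic target genuinely meets the curvature hypothesis of Sampson's rank estimate, and ensuring the earlier reduction really does land in the Kähler case so that the Kähler identities underlying that estimate are available. By contrast, the passage through the classification, once asphericity has excluded all curves of nonzero class, is essentially bookkeeping with the Chern-number constraints $c_1^2 = 2\chi > 0$ and $\sigma = 0$.
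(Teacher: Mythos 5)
Your argument is correct, but it is a reconstruction of Kotschick's original proof --- which is precisely the proof this paper sets out to replace. The two arguments share the first half: asphericity kills rational curves (hence minimality and non-ruledness), and $\tau = 0$, $\chi > 0$ give $c_1^2 = 2\chi > 0$, which eliminates Kodaira dimension $\leq 1$ and class $\mathrm{VII}$, leaving general type. From there you follow Eells--Sampson (a harmonic map $f$ homotopic to the homotopy equivalence, available once $X$ is K\"ahler) and Sampson's rank bound ($\operatorname{rank}_{\IR} df \leq 2$ for harmonic maps from compact K\"ahler manifolds into real-hyperbolic space forms), which contradicts $\deg f = \pm 1$; your closing worry is unfounded, since constant negative curvature does satisfy Sampson's nonpositive Hermitian sectional curvature hypothesis, and the reduction to general type does guarantee projectivity, hence K\"ahlerness. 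The paper instead pushes the classification step further: Lemma \ref{Albanese1} uses the Hodge index theorem and Nakai--Moishezon to show $K_X$ is \emph{ample} (your observation that there are no $(-2)$-curves is exactly what is needed there), so that Aubin--Yau produces a K\"ahler--Einstein metric of negative cosmological constant; then Besson--Courtois--Gallot rigidity forces any Einstein metric on a manifold homotopy equivalent to $M$ to be isometric to the real-hyperbolic one, which is impossible for a K\"ahler--Einstein metric since a negatively curved locally symmetric K\"ahler--Einstein metric is complex-hyperbolic. Each route buys something: yours needs only K\"ahlerness rather than ampleness of $K_X$ and is the one that extends to fundamental-group statements in the style of Carlson--Toledo; the paper's Einstein-metric route is the one that generalizes to Theorem \ref{graph} on extended graph $4$-manifolds with a pure real-hyperbolic piece, where there is no negatively curved target available and harmonic-map techniques are currently out of reach.
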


A weaker version of this result, namely that a closed real-hyperbolic $4$-manifold cannot admit a complex structure, first appeared in a paper of Wall \cite[Theorem 10.2]{Wal85}. It was later clarified and strengthened by Kotschick to give the above statement \cite[Proposition 2]{Kot92}. The proof first uses the Kodaira-Enriques classification to reduce to the case of general type surfaces. This last possibility is then ruled out by combining results from the theory of harmonic maps due to Eells and Sampson \cite{ES64} and Sampson \cite{Sam86}. The former implies that any map from a compact K\"ahler manifold to a closed real-hyperbolic manifold is homotopic to a harmonic map, while the latter shows that any such harmonic map cannot be a homotopy equivalence in real dimension greater than two. Again using harmonic map techniques, Carlson and Toledo \cite[Corollary 1.2]{CT97} were later able to show that the fundamental group of a closed real-hyperbolic $n$-manifold, with $n\geq 3$, cannot be the fundamental group of any compact complex surface. 

After collecting some generalities about aspherical complex surfaces in Section \ref{2}, we give an alternative proof of the above theorem in Section \ref{3}. This new approach replaces the harmonic map techniques with the the Aubin-Yau result on the existence of K\"ahler-Einstein metrics and a rigidity result of Besson-Gallot-Courtois for Einstein metrics in dimension four. As mentioned above, the existence of an almost complex structure is a necessary condition for the existence of a complex structure, so one may suspect that these manifolds don't admit almost complex structures either. This is not the case; there are many examples of closed real-hyperbolic $4$-manifolds which admit almost complex structures, see Remark \ref{almostreal}.

One may wonder what are the advantages of providing a new proof of Theorem \ref{Kot} based on the theory of Einstein metrics in dimension four. Generally speaking, a new point of view on a classical result can only shed more light on closely related problems. In this case, the new proof nicely generalizes to prove a non-existence result for complex structures on a much more general class of $4$-manifolds (see Section \ref{4}). This is the \emph{main} result of this paper:

\begin{theoremA}
A closed, oriented, extended graph $4$-manifold $M^4$ with at least one pure real-hyperbolic piece cannot admit a complex structure.
\end{theoremA}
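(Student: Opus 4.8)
The plan is to prove this by contradiction, following the strategy the authors outline for their new proof of Theorem \ref{Kot}, but now leveraging the decomposition of an extended graph manifold into geometric pieces. So I would suppose that $M^4$ admits a complex structure $J$ and try to derive a contradiction from the presence of a pure real-hyperbolic piece.

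First I would recall the structure of an extended graph $4$-manifold: it is glued along tori (or flat $3$-manifold boundaries) from geometric pieces, at least one of which is a finite-volume real-hyperbolic piece $N$ with cusps. The key topological input should be that the hyperbolic piece forces the Euler characteristic and signature to behave in a controlled way; in particular, by the Gauss-Bonnet/Chern-Gauss-Bonnet theorem the real-hyperbolic piece contributes positively to $\chi(M^4)$, so that $\chi(M^4) > 0$. The plan is to then invoke the analogue of the generalities in Section \ref{2} to reduce, via the Kodaira-Enriques classification, to the case where $(M^4,J)$ is a minimal surface of general type (the other classes being excluded because an extended graph manifold with a hyperbolic piece is aspherical and has the appropriate sign of $\chi$ and $c_1^2$). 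For a minimal surface of general type we have the Miyaoka-Yau-type inequality $c_1^2 \le 3 c_2$, i.e. $2\chi + 3\sigma \le 3\chi$, equivalently $\chi \ge 3|\sigma|$ style constraints that I would convert into statements about $\chi$ and $\sigma$ of $M^4$.

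The heart of the argument, mirroring the new proof, is to combine the existence of a Kähler-Einstein metric with a rigidity statement. On a minimal surface of general type the Aubin-Yau theorem supplies a Kähler-Einstein metric of negative scalar curvature, and equality in the Miyaoka-Yau inequality would force this metric to be complex-hyperbolic. I would then want to use the Besson-Gallot-Courtois rigidity in dimension four together with the existence of a genuine real-hyperbolic piece to obstruct this: the presence of a pure real-hyperbolic piece with nonzero volume pins down the minimal-volume/entropy invariants in a way incompatible with a complex-hyperbolic (or more generally Kähler-Einstein) structure on the whole of $M^4$. Concretely, I expect the contradiction to come from comparing the volume-entropy or the $L^2$ (Gauss-Bonnet/signature) integrals of the two metrics on the hyperbolic piece, where the real-hyperbolic geometry forces a strict inequality that the Kähler-Einstein side cannot meet.

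The main obstacle, I expect, is handling the \emph{non-compactness and gluing} inherent in the extended graph manifold: the real-hyperbolic piece is only a finite-volume cusped piece glued to the rest along flat boundaries, so the clean Gauss-Bonnet and Besson-Gallot-Courtois rigidity statements (which are cleanest for closed or finite-volume hyperbolic manifolds) must be adapted to a piece-with-boundary sitting inside a closed complex surface. I would need a localization or additivity argument showing that the hyperbolic piece's contribution to the relevant curvature integral survives the gluing and cannot be cancelled by the graph (Seifert-fibered or flat) pieces, which contribute non-positively. Making this localization rigorous — ensuring that the rigidity obstruction is detected by a single pure real-hyperbolic piece even in the presence of other geometric pieces — is where the real work lies, and it is precisely the step where the asphericity established in Section \ref{2} and the dimension-four rigidity of Besson-Gallot-Courtois must be combined most carefully.
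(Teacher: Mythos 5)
Your reduction matches the paper's proof of Theorem \ref{graph} exactly: Chern--Gauss--Bonnet on the pieces gives $\tau(M)=0$ and $\chi(M)>0$ (Lemma \ref{Euler}); asphericity plus the Kodaira--Enriques classification (Lemmas \ref{Albanese0} and \ref{Albanese1}) forces $(M,J)$ to be minimal of general type with ample canonical bundle; and Aubin--Yau then produces a K\"ahler--Einstein metric of negative cosmological constant. (Your detour through the Miyaoka--Yau inequality is unnecessary for this.) The problem is the final step, which is where your proposal stops being a proof. The paper closes the argument with a single citation: by \cite[Theorem 1]{DiC22}, a closed extended graph $4$-manifold cannot support \emph{any} Einstein metric, so the K\"ahler--Einstein metric is an immediate contradiction. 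You instead propose to obstruct the Einstein metric by ``localizing'' Besson--Courtois--Gallot rigidity to the pure real-hyperbolic piece, comparing entropy or curvature integrals on that piece against the rest, and you explicitly acknowledge that making this rigorous ``is where the real work lies.'' That acknowledged step is precisely the hard content of the theorem, and it is left unproved; as written, the proposal assumes the conclusion of a substantial published result rather than deriving it.

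Moreover, the localization strategy you sketch is not how that obstruction is actually established, and there is reason to doubt it can be made to work in the form you describe. The rigidity theorem of \cite{BCG95} is a global statement about closed manifolds; it does not apply piecewise to a cusped hyperbolic piece glued along tori inside a closed manifold, and curvature integrals of an arbitrary Einstein metric on $M$ cannot be compared piece-by-piece with those of the hyperbolic metric on a single vertex, because the Einstein metric need not respect the graph decomposition at all. The known proof of the Einstein non-existence result is global: one combines the Besson--Courtois--Gallot entropy inequality for closed Einstein $4$-manifolds with a computation of the minimal volume entropy of extended graph $4$-manifolds (which is concentrated on the hyperbolic pieces), observes via Lemma \ref{Euler} that an Einstein metric would force equality, and then rules out the equality case because $\pi_1(M)$ contains $\mathbb{Z}^3$ subgroups coming from the gluing tori, which a closed real- or complex-hyperbolic manifold cannot have. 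If you want a self-contained argument, that is the route you would need to carry out; otherwise, the correct fix is simply to invoke \cite[Theorem 1]{DiC22} at the point where your proposal trails off.
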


The existence of at least one pure real-hyperbolic piece is equivalent to the topological condition $\chi(M) > 0$; this assumption is necessary, see Remark \ref{necessary}. Theorem A is currently out of reach for the harmonic map techniques that featured in Kotschick's original proof of Theorem \ref{Kot}. With that said, we refer to \cite{Her01} for an extension of the ideas of Carlson and Toledo to certain graph-like manifolds in dimension three. Just as in the real-hyperbolic case, many of the manifolds in Theorem A admit almost complex structures, see Remark \ref{almostgraph}.\\

\vspace{0.1 in}
\noindent\textbf{Acknowledgments}. The authors thank Benson Farb and Dieter Kotschick for very useful bibliographical suggestions and for pertinent comments on the manuscript. In particular, Dieter Kotschick provided a useful reference to a strengthening of Remark \ref{almostreal} and several remarks which improved the clarity of the paper. The authors would also like to thank Claude LeBrun for discussions during the early stages of this work regarding Theorem \ref{Kot}. The first author would also like to thank Dennis Sullivan for some clarifying remarks. 

During the preparation of this work, the first author visited the University of Florida. He thanks this institution for the hospitality and support. During the final stages of this work, the second author visited the University of Waterloo. He thanks this institution for support and for the invitation to present this research in the Geometry and Topology seminar. This work is partially supported by NSF grant DMS-2104662.\\

\section{Aspherical $4$-Manifolds and Complex Structures}\label{2}

In this section, we collect some generalities concerning aspherical $4$-manifolds with a complex structure. Throughout, we rely on the Kodaira-Enriques classification, see \cite[Table 10]{BHPV04} for instance.

\begin{lemma}\label{Albanese0}
If a closed aspherical $4$-manifold admits a complex structure, then it has to be minimal. Moreover, if it is K\"ahler, it does not contain any rational curves.
\end{lemma}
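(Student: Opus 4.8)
The plan is to extract everything from the single topological consequence of asphericity, namely that $\pi_2(M) = 0$. Through the Hurewicz homomorphism $\pi_2(M) \to H_2(M;\IZ)$, the class $f_*[S^2]$ of any continuous map $f\colon S^2 \to M$ equals the image of the homotopy class $[f]$; since $\pi_2(M)=0$, every such $[f]$ vanishes, and hence the fundamental class of any (possibly singular) $2$-sphere in $M$ is zero in $H_2(M;\IZ)$. This one observation drives both assertions.

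For minimality I would argue by contradiction. If the complex surface $M$ is not minimal, then by Castelnuovo's contraction theorem—valid for all compact complex surfaces in the Kodaira--Enriques framework cited above—it contains a $(-1)$-curve $E$, that is, a smoothly embedded rational curve $E \cong \IP^1$ with $E\cdot E = -1$. As $E$ is a topological $2$-sphere, the remark above gives $[E] = 0$ in $H_2(M;\IZ)$; but the self-intersection $E\cdot E$ is a function of $[E]$ alone, so $[E]=0$ forces $E\cdot E = 0$, contradicting $E\cdot E = -1$. Notice this uses nothing beyond $\pi_2(M)=0$ and the nonvanishing self-intersection of a $(-1)$-curve, so no Kähler hypothesis enters here. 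For the second assertion, suppose $M$ is Kähler with Kähler form $\omega$ and contains a rational curve $C$. Its normalization $\nu\colon \IP^1 \to C \subset M$ is a continuous map out of $S^2$ of degree one onto $C$, so $\nu_*[\IP^1] = [C]$ and asphericity again yields $[C] = 0$ in $H_2(M;\IR)$. On the other hand, restricting $\omega$ to the smooth locus of $C$ gives a positive area form by Wirtinger's inequality, so $\langle [\omega],[C]\rangle = \int_C \omega > 0$ and $[C]\neq 0$—a contradiction. The Kähler hypothesis is essential precisely because a general rational curve may have zero self-intersection, and only the pairing with $[\omega]$ is guaranteed to detect its class.

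The routine points to verify are the Hurewicz identification of $f_*[S^2]$ with the image of $[f]\in\pi_2(M)$, and that a degree-one normalization pushes $[\IP^1]$ forward to the fundamental class $[C]$. The genuine subtlety, rather than a computation, is conceptual: minimality is a purely homological fact about embedded $(-1)$-spheres that holds for \emph{any} complex structure, whereas excluding arbitrary rational curves cannot be homological alone and must invoke the Kähler form to certify homological nontriviality. I expect the only real care to be needed in keeping these two regimes—and their differing hypotheses—cleanly separated, and in confirming that Castelnuovo contraction is available in the analytic (not merely algebraic) category.
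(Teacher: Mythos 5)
Your proposal is correct and follows essentially the same route as the paper: the paper lifts any map $\mathbb{CP}^1 \to X$ to the contractible universal cover to conclude homological triviality (your $\pi_2 = 0$ plus Hurewicz formulation is the same observation), then derives the contradiction from the self-intersection $-1$ of an exceptional curve and from positivity of the K\"ahler form on curves. If anything, your handling of possibly singular rational curves via the normalization spells out a detail the paper leaves implicit in its one-line appeal to nontriviality of compact subvarieties in K\"ahler manifolds.
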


\begin{proof}
Let $\pi : \widetilde{X} \to X$ be the universal cover of an aspherical complex surface $X$. Suppose $i : \mathbb{CP}^1 \hookrightarrow X$ is the inclusion of a rational curve in $X$. Note that $i$ lifts to a map $\mathbb{CP}^1 \to \widetilde{X}$ which is nullhomotopic as $\widetilde{X}$ is contractible; it follows that $i_*[\mathbb{CP}^1] = 0$. Every compact submanifold of a compact K\"ahler manifold is non-trivial in homology, as is the exceptional divisor of a blowup, so the result follows.  
\end{proof}

\begin{lemma}\label{Albanese1}
If a closed, oriented, aspherical $4$-manifold with 
\[
\chi>\frac{3}{2}|\tau|
\]
admits a complex structure, then it has to be minimal of general type with ample canonical line bundle.
\end{lemma}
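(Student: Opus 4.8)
The plan is to combine the minimality supplied by Lemma~\ref{Albanese0} with a Chern-number computation and a case-by-case elimination through the Kodaira-Enriques classification. Writing $X$ for the complex surface underlying $M^4$, Lemma~\ref{Albanese0} already tells us $X$ is minimal. I would then translate the topological hypothesis into the language of Chern numbers using the two identities valid for \emph{any} compact complex surface, namely $c_2(X)=\chi$ and, via the Hirzebruch signature theorem, $c_1^2(X)=2\chi+3\tau$. Since $\chi>\tfrac{3}{2}|\tau|$ gives $2\chi>3|\tau|$, we obtain
\[
c_1^2(X)=2\chi+3\tau\ \geq\ 2\chi-3|\tau|\ >\ 0,
\]
and also $\chi=c_2(X)>0$. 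I would note that this is insensitive to which orientation the complex structure induces, since both $\chi$ and $|\tau|$ are orientation-independent.

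The heart of the argument is to show that a minimal, aspherical surface with $c_1^2>0$ must be of general type. I would run through the Kodaira dimensions $\kappa(X)\in\{-\infty,0,1,2\}$. For minimal surfaces one has $c_1^2=0$ whenever $\kappa\in\{0,1\}$, so the positivity $c_1^2>0$ immediately eliminates those cases. The case $\kappa=-\infty$ splits into two: the K\"ahler representatives (rational and ruled surfaces) carry a nonzero $\pi_2$ — generated by a line in $\mathbb{CP}^2$ or by the $\mathbb{CP}^1$-fiber of a ruling — so they are not aspherical and are excluded by hypothesis; the non-K\"ahler representatives are the class~VII surfaces, for which $b_2^{+}=0$ forces $\tau=-b_2$ and $\chi=b_2$, whence $c_1^2=2\chi+3\tau=-b_2\leq 0$, again contradicting $c_1^2>0$. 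I expect the class~VII surfaces to be the main obstacle: they are the one family of aspherical non-K\"ahler surfaces (for instance the Inoue surfaces) that survives a naive argument, and ruling them out is exactly where the non-K\"ahler form of the identity $c_1^2=-b_2$ together with $\chi>0$ does the work. This leaves $\kappa=2$, so $X$ is minimal of general type.

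Finally I would upgrade ``general type'' to ``ample canonical bundle.'' A minimal surface of general type is projective, hence K\"ahler, so the second clause of Lemma~\ref{Albanese0} applies and $X$ contains no rational curves. For such a surface $K_X$ is nef and big, so by the Nakai-Moishezon criterion it fails to be ample only along irreducible curves $C$ with $K_X\cdot C=0$; adjunction identifies these as the $(-2)$-curves, which are smooth rational. Since $X$ has no rational curves at all, no such curve exists, so $K_X\cdot C>0$ for every curve $C$ and $K_X$ is ample. The only genuinely delicate points to double-check are that the Chern-number identities require no K\"ahler hypothesis (they are purely characteristic-class statements) and the precise nef-and-big-plus-no-$(-2)$-curves criterion for ampleness on a general type surface.
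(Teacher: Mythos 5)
Your proof is correct and follows essentially the same route as the paper: minimality from Lemma~\ref{Albanese0}, the Chern-number inequality $c_1^2(X)=2\chi+3\tau>0$, elimination of every Kodaira dimension except $2$ via the Enriques--Kodaira classification (class~VII killed by $c_1^2\leq 0$, rational/ruled killed by asphericity, minimal $\kappa=0,1$ killed by $c_1^2=0$), and then Nakai--Moishezon plus the absence of rational curves to get ampleness. The only cosmetic difference is that you cite the standard fact that an irreducible curve with $K_X\cdot C=0$ on a minimal general type surface must be a smooth rational $(-2)$-curve, whereas the paper derives this on the spot from the Hodge index theorem, the arithmetic genus formula, and the normalization; both routes end in the same contradiction with Lemma~\ref{Albanese0}.
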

\begin{proof}
Suppose such a manifold $X$ admits a complex structure. Note that $X$ must be minimal by Lemma \ref{Albanese0}. Furthermore, since $c_1^2(X) = 2\chi(X) + 3\tau(X)$, the stated inequality implies $c_1^2(X) > 0$. 

Note that $X$ cannot be a surface of type $\mathrm{VII}$ since $c_1(X)^2 \leq 0$ for such surfaces. Since surfaces of non-K\"ahler type of Kodaira dimension $\kappa(X) = 0, 1$ are elliptic, they satisfy $c_1(X)^2 = 0$; likewise for surfaces of K\"ahler type with $\kappa(X) = 1$. As a surface of K\"ahler type with $\kappa(X) = 0$ is finitely covered by a torus or a K3 surface, they all satisfy $c_1^2(X) = 0$ (alternatively, they are deformation equivalent to an elliptic surface). While the K\"ahler type surfaces with $\kappa(X) = -\infty$ satisfy $c_1^2(X) > 0$, they are all rational or ruled which are not aspherical. That only leaves general type.

To see that the canonical bundle of $X$ must be ample, we use the Nakai-Moishezon criterion. First note that $K_X^2 = K_X\cdot K_X = c_1(K_X)^2 = c_1(X)^2 > 0$. Now suppose, by way of contradiction, there is an irreducible complex curve $C$ in $X$ with $K_X\cdot C \leq 0$. Since $K_X$ is nef, it follows that $K_X\cdot C = 0$. Since $K_X^2 > 0$, it follows from the Hodge index theorem \cite[Corollary IV.2.16]{BHPV04} that $C^2 = C\cdot C \leq 0$. Note that equality occurs if and only if $C$ is numerically trivial, which is not the case since it has non-zero intersection with an ample divisor, so $C^2 < 0$. The arithmetic genus of $C$ is given by
\[
p_{a}(C)=1+\frac{K_{X}\cdot C+C^{2}}{2}=1+\frac{C^{2}}{2}.
\]
Since $C^2 < 0$ and $p_a(C)$ is a non-negative integer, we conclude that $C^2 = -2$ and $p_a(C) = 0$. Let $\overline{C}$ be the normalization of $C$.  Recall that  in dimension one the normalization coincides with resolution of singularities, so $\overline{C}$ is smooth. Now
\[
g(\overline{C}) = p_a(C) - \sum_i\delta_{x_i}
\]
where $g(\overline{C})$ is the topological genus, and where $\delta_{x_i}$ is the local genus drop at the singular points $x_i$ of $C$. Since $g(\overline{C})$ is non-negative and $p_a(C) = 0$, we see that $\delta_{x_i} = 0$. Therefore $C$ is a smooth rational curve with $C^2 = -2$. The existence of such a curve is prohibited by Lemma \ref{Albanese0}, so we must have $K_X\cdot C > 0$ for all curves $C$ in $X$, so $K_X$ is ample. 

\end{proof}

\section{Hyperbolic $4$-Manifolds and Complex Structures Revisited}\label{3}

In this section, we present an alternative proof of the following result of Kotschick \cite[Proposition 2]{Kot92}:

\begin{theorem1}[Kotschick]
There is no compact complex surface homotopy equivalent to a closed real-hyperbolic $4$-manifold.
\end{theorem1}

After using the Kodaira-Enriques classification to reduce to the case of general type surfaces, the original proof uses results from the theory of harmonic maps to rule out this case. The proof below only differs in the argument used to deal with general type surfaces. Instead, we combine the observations from Section \ref{2} with results regarding Einstein metrics due to Aubin and Yau, and Besson-Courtois-Gallot.

\begin{proof}[Alternative proof]
By contradiction, suppose $X$ is a compact complex surface which is homotopy equivalent to a closed real-hyperbolic $4$-manifold $M$. Since the real-hyperbolic metric on $M$ is Einstein and has vanishing Weyl curvature, the Chern-Gauss-Bonnet and Thom-Hirzebruch formulas imply that
\[
\tau(M)=0, \quad \chi(M)>0.
\]
Note that $X$ is aspherical since $M$ is. Moreover, $X$ satisfies the assumptions of Lemma \ref{Albanese1} since $\tau(X) = \tau(M) = 0$ and $\chi(X) = \chi(M) > 0$. We conclude that $X$ has to be a surface of general type with ample canonical line bundle. Due to the well-known existence result for K\"ahler-Einstein metrics of Aubin \cite{Aub76} and Yau \cite{Yau78}, $X$ supports a K\"ahler-Einstein metric with negative cosmological constant. On the other hand, a celebrated rigidity result of Besson-Courtois-Gallot \cite{BCG95} (also see \cite{BCG96}), implies that if $X$ admits an Einstein metric $g$, then $X$ is diffeomorphic to $M$ and $g$ is isometric to a real-hyperbolic metric. In particular, $g$ must be locally symmetric. Now, a locally symmetric, K\"ahler-Einstein metric with negative sectional curvature must be complex hyperbolic and as such it cannot have constant sectional curvature.
\end{proof}

\begin{remark}\label{almostreal}
While no closed real-hyperbolic $4$-manifold can admit a complex structure, there are many which admit almost complex structures. In an unpublished proof, Kotschick has shown, in response to a question from Toledo, that a closed orientable real-hyperbolic manifold admits an almost complex structure if and only if the Euler characteristic is divisible by four\footnote{This was initially communicated to us privately by Kotschick and has since appeared in print, see \cite{Kot23}.}. Such manifolds exist, e.g., we can take the manifolds constructed in \cite{CM05} and \cite{Lo08} which have Euler characteristic $16$. These appear to be the orientable, closed, real-hyperbolic $4$-manifolds with the smallest Euler characteristic currently available in the literature. By taking covers, we obtain infinitely many examples.

On the other hand, it has been conjectured by LeBrun that no closed hyperbolic $4$-manifold can admit a symplectic form \cite{LeB02}. We refer to the recent work of Agol and Lin \cite{AL20} for some progress on this conjecture and the first examples of real-hyperbolic $4$-manifolds with vanishing Seiberg-Witten invariants.
\end{remark}

\section{Extended Graph $4$-Manifolds and Complex Structures}\label{4}

In this section, we prove that \textit{extended graph $4$-manifolds} with positive Euler characteristic cannot support a complex structure. Extended graph $n$-manifolds were defined by Frigerio-Lafont-Sisto in \cite[Definition 0.2]{LafontBook}. These manifolds are decomposed into finitely many pieces referred to as the vertices. Each vertex is a manifold with boundary tori, and these tori give the edges of the graph. More precisely, the various boundaries pieces are pieced together via affine diffeomorphisms in order to obtain a closed $4$-manifold. The interior of each vertex is diffeomorphic to a finite volume real-hyperbolic $4$-manifold $\Gamma\backslash\textbf{H}^{4}_{\IR}$ with toral cusps, or to the products
\[
\Lambda\backslash\textbf{H}^{3}_{\IR}\times S^1,\qquad \Lambda^\prime\backslash\textbf{H}^{2}_{\IR}\times T^2
\]
of a lower dimensional finite volume real-hyperbolic manifold with toral cusps with a standard torus of the appropriate dimension (the product pieces). Moreover, if the Euler characteristic is positive, at least one of the vertices must be pure real-hyperbolic. Indeed, an application of the Chern-Gauss-Bonnet theorem (\emph{cf}. \cite{CG85}) and the Thom-Hirzebruch formula give the following.

\begin{lemma}\emph{\cite[Lemma 8]{DiC22}}\label{Euler}
	Let $M^4$ be a closed, oriented, extended graph $4$-manifold with $k\geq 1$ pure real-hyperbolic pieces, say $\{(\Gamma_{i}\backslash\textbf{H}^{4}_{\IR}, g_{-1})\}^k_{i=1}$. We then have
	\[
	\tau(M)=0, \quad \chi(M)=\frac{3}{4\pi^2}\sum^k_{i=1}Vol_{g_{-1}}(\Gamma_i\backslash\textbf{H}^{4}_{\IR})>0.
	\]
\end{lemma}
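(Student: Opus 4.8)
The plan is to cut $M$ along the gluing $3$-tori into its vertex pieces $V_1,\dots,V_N$, compute each topological invariant piece-by-piece, and reassemble using the additivity of $\chi$ and $\tau$. Here each $V_j$ is the compact core of one of the geometric pieces, so $V_j$ is homotopy equivalent to a finite-volume $\Gamma_j\backslash\textbf{H}^4_\IR$, to some $Y\times S^1$, or to some $Y'\times T^2$, and every gluing is performed along a flat $3$-torus $T^3$.

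For the Euler characteristic I would first invoke the additivity $\chi(A\cup_{T^3} B)=\chi(A)+\chi(B)-\chi(T^3)$ together with $\chi(T^3)=0$ to reduce to $\chi(M)=\sum_j \chi(V_j)$. The product pieces contribute nothing: the presence of an $S^1$- or $T^2$-factor forces $\chi(Y\times S^1)=\chi(Y)\chi(S^1)=0$ and $\chi(Y'\times T^2)=0$ by multiplicativity of $\chi$. For a pure real-hyperbolic piece, $\chi(V_j)=\chi(\Gamma_j\backslash\textbf{H}^4_\IR)$ since the core is homotopy equivalent to the full finite-volume manifold, and the Chern-Gauss-Bonnet theorem for finite-volume hyperbolic manifolds (this is the appeal to \cite{CG85}) gives $\chi(\Gamma_j\backslash\textbf{H}^4_\IR)=\frac{3}{4\pi^2}\mathrm{Vol}_{g_{-1}}(\Gamma_j\backslash\textbf{H}^4_\IR)$, using that the Weyl and trace-free Ricci parts of the Gauss-Bonnet integrand vanish for a hyperbolic metric (so the integrand reduces to the constant coming from $s\equiv -12$) and that the cusps contribute no boundary term. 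Summing over the $k$ hyperbolic pieces yields the stated formula, which is strictly positive since $k\ge 1$ and hyperbolic volumes are positive.

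For the signature I would use Novikov additivity, which applies here without any Wall correction because the pieces meet cleanly along disjoint closed $3$-tori (no triple overlaps), giving $\tau(M)=\sum_j \tau(V_j)$. It then suffices to show every piece has vanishing signature. On a pure hyperbolic piece the Pontryagin integrand of $g_{-1}$ vanishes identically, since constant curvature forces $W^+=W^-=0$, so the Thom-Hirzebruch/APS expression $\tau=\frac{1}{12\pi^2}\int(|W^+|^2-|W^-|^2)\,dV_{g_{-1}}$ is zero once the cusp cross-sections — flat $3$-tori — are checked to contribute no eta term. Each product piece splits as $Z\times S^1$ with $Z$ a compact $3$-manifold, so its Pontryagin form pulls back from a $3$-dimensional factor and vanishes, while its boundary is again a flat $T^3$; hence $\tau(V_j)=0$ there as well. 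Adding up gives $\tau(M)=0$.

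The routine part is the topological bookkeeping: the additivity of $\chi$ and $\tau$ and the product computations. The genuine content, and the step I expect to be the main obstacle, is the geometric vanishing of the boundary and cusp contributions — justifying that the Chern-Gauss-Bonnet formula on the cusped hyperbolic piece carries no boundary correction (so that $\chi=\frac{3}{4\pi^2}\mathrm{Vol}_{g_{-1}}$ holds on the nose) and that the signature defect associated to the flat $3$-torus cross-sections vanishes. Both follow from the special structure of hyperbolic cusps and the symmetry of flat tori, and this is precisely where the Cheeger-Gromov reference enters.
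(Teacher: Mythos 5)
Your proposal is correct and takes essentially the approach the paper intends: the paper does not prove this lemma itself, but imports it from \cite[Lemma 8]{DiC22}, noting only that it follows from the Chern--Gauss--Bonnet theorem (\emph{cf}.\ \cite{CG85}) and the Thom--Hirzebruch formula. Your cut-along-tori decomposition with additivity of $\chi$ and Novikov additivity of $\tau$, the Cheeger--Gromov Gauss--Bonnet theorem on the cusped hyperbolic pieces, and the vanishing of the $\eta$-invariants of the flat $3$-torus cross-sections is precisely the standard fleshing-out of that remark.
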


We can now prove the main non-existence result.

\begin{theorem}\label{graph}
	A closed, oriented, extended graph $4$-manifold $M^4$ with at least one pure real-hyperbolic piece cannot admit a complex structure.
\end{theorem}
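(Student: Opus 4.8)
The plan is to run the alternative proof of Theorem~\ref{Kot} from Section~\ref{3}, replacing its closing appeal to the standard Besson--Courtois--Gallot rigidity (which requires a globally hyperbolic model) by a volume-entropy comparison adapted to the hyperbolic pieces. Suppose for contradiction that $M$ admits a complex structure, so $M$ becomes a compact complex surface $X$ on the same underlying smooth manifold. First I would record that $M$, being an extended graph manifold in the sense of Frigerio--Lafont--Sisto, is aspherical, so $X$ is an aspherical complex surface. By Lemma~\ref{Euler} we have $\tau(X)=\tau(M)=0$ and $\chi(X)=\chi(M)>0$, hence $\chi(X)>\tfrac{3}{2}|\tau(X)|$; Lemma~\ref{Albanese1} then forces $X$ to be minimal of general type with ample canonical bundle, and the Aubin--Yau theorem equips $X$ with a K\"ahler--Einstein metric $g$ of negative Einstein constant. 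Rescaling, I normalize $g$ so that $\mathrm{Ric}(g)=-3g$, matching the curvature normalization of the real-hyperbolic metric $g_{-1}$.

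The contradiction comes from sandwiching the quantity $h(g)^4\,\mathrm{Vol}(M,g)$, where $h(g)$ denotes the volume entropy. For a negative Einstein $4$-manifold with $\mathrm{Ric}=-3g$ the scalar curvature is $s\equiv-12$, so the Chern--Gauss--Bonnet formula gives $\chi=\frac{1}{8\pi^2}\int_M(|W|^2+6)\,dV\geq\frac{3}{4\pi^2}\mathrm{Vol}(M,g)$, with equality if and only if $W\equiv0$; combined with the Bishop entropy bound $h(g)\leq 3$ this yields $h(g)^4\,\mathrm{Vol}(M,g)\leq 81\,\mathrm{Vol}(M,g)\leq 108\pi^2\chi$. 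On the other hand, a Besson--Courtois--Gallot type lower bound applied to the pure real-hyperbolic pieces should give $h(g)^4\,\mathrm{Vol}(M,g)\geq 81\sum_i\mathrm{Vol}_{g_{-1}}(\Gamma_i\backslash\textbf{H}^4_{\IR})$, and by Lemma~\ref{Euler} the right-hand side equals $108\pi^2\chi$. Thus
\[
108\pi^2\chi \;\leq\; h(g)^4\,\mathrm{Vol}(M,g)\;\leq\; 81\,\mathrm{Vol}(M,g)\;\leq\; 108\pi^2\chi,
\]
so every inequality is an equality; in particular the Gauss--Bonnet estimate is saturated, forcing $W\equiv0$. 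A negative Einstein $4$-manifold with vanishing Weyl tensor has constant negative sectional curvature, so $g$ is real-hyperbolic. This is absurd: a constant-curvature metric has holonomy $\mathrm{SO}(4)$ and cannot be K\"ahler, and moreover a closed real-hyperbolic manifold has Gromov-hyperbolic fundamental group, whereas the cutting $3$-tori of $M$ produce a $\mathbb{Z}^3$ subgroup of $\pi_1(M)$.

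The reduction to the K\"ahler--Einstein surface is routine and essentially identical to Section~\ref{3}. The main obstacle is the lower bound $h(g)^4\,\mathrm{Vol}(M,g)\geq 81\sum_i\mathrm{Vol}_{g_{-1}}(\Gamma_i\backslash\textbf{H}^4_{\IR})$. Unlike the closed-hyperbolic case, there is no degree-one map from $M$ onto a single closed hyperbolic $4$-manifold, and the hyperbolic pieces are finite-volume but noncompact, with flat $3$-torus cusp cross-sections glued along the edges of the graph. I would therefore run the Besson--Courtois--Gallot barycenter/natural-map construction in the cusped setting, in the spirit of Boland--Connell--Souto's entropy rigidity for finite-volume locally symmetric spaces, applying it piece by piece and checking that the amenable gluing regions around the cutting tori contribute non-negatively, so that the per-piece estimates sum to the stated bound. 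Verifying that this construction survives the cusps and the glued tori, and that the equality case still detects the hyperbolic model, is where the genuine difficulty lies.
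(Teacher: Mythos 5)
Your reduction to a K\"ahler--Einstein metric is exactly the paper's: asphericity of extended graph manifolds (Frigerio--Lafont--Sisto), Lemma \ref{Euler}, Lemma \ref{Albanese1}, and Aubin--Yau; and your bookkeeping afterwards is correct (with $\mathrm{Ric}=-3g$, Chern--Gauss--Bonnet gives $\chi \geq \frac{3}{4\pi^2}\mathrm{Vol}(M,g)$ with equality iff $W\equiv 0$, Bishop gives $h(g)\leq 3$, and Lemma \ref{Euler} converts the sum of hyperbolic volumes into $\frac{4\pi^2}{3}\chi$). The genuine gap is the step you yourself flag: the lower bound $h(g)^4\,\mathrm{Vol}(M,g)\geq 81\sum_i \mathrm{Vol}_{g_{-1}}(\Gamma_i\backslash\textbf{H}^{4}_{\IR})$ for an \emph{arbitrary} Riemannian metric $g$ on $M$. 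This is not a verification one can wave through ``piece by piece'': the hyperbolic pieces are noncompact, finite-volume, and sit inside $M$ as codimension-zero submanifolds with toral boundary, so there is no obvious proper map of nonzero degree from $(M,g)$ onto any single finite-volume hyperbolic manifold, and Boland--Connell--Souto's finite-volume entropy rigidity does not apply off the shelf. Constructing the natural map relative to the pieces and showing that the product pieces and the amenable gluing regions do not spoil the Jacobian estimate is a substantial theorem in its own right --- it is precisely the minimal-volume-entropy computation for higher/extended graph manifolds due to Connell and Su\'arez-Serrato --- and your proposal asserts it rather than proves it.

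The twist is that the statement you are trying to reprove inline (no Einstein metric of any kind on a closed extended graph $4$-manifold) is exactly what the paper invokes as a black box: after Aubin--Yau, the paper's proof simply cites \cite[Theorem 1]{DiC22} and is done, and the proof of that cited theorem is, in essence, the entropy-plus-Gauss--Bonnet argument you outline. So your proposal becomes a correct proof either by citing \cite[Theorem 1]{DiC22} immediately after producing the K\"ahler--Einstein metric (which collapses it to the paper's own proof), or by citing the known entropy inequality for extended graph manifolds instead of promising to re-derive it. One point in your favor: you do not actually need the BCG equality case to ``detect the hyperbolic model,'' which you list among the remaining difficulties. Equality in your chain already forces $W\equiv 0$ by Gauss--Bonnet alone, and a K\"ahler surface satisfies $|W^+|^2 = s^2/24$, so $W\equiv 0$ is incompatible with $s=-12$; the contradiction is immediate, with no rigidity statement and no appeal to the fundamental group required.
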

\begin{proof}
By contradiction, let us assume $M$ admits a complex structure. By Lemma \ref{Euler}, we know that
\[
\tau(M)=0, \quad \chi(M)>0.
\]
Thus, $M$ satisfies the assumptions of Lemma \ref{Albanese0} and Lemma \ref{Albanese1}. We conclude that $M$ has to be a surface of general type with ample canonical line bundle. Because of the celebrated existence result for K\"ahler-Einstein metrics of Aubin \cite{Aub76} and Yau \cite{Yau78}, $M$ supports an Einstein metric with negative cosmological constant. On the other hand, because of \cite[Theorem 1]{DiC22} we know that closed, extended graph $4$-manifolds cannot support any Einstein metric. This contradiction implies that $M$ cannot be complex.
\end{proof}

We close with three remarks concerning the sharpness of Theorem \ref{graph}.

\begin{remark}
The proof of Theorem \ref{graph} is very similar in spirit to the proof given in Section \ref{3}. With that said, there is a crucial difference, namely here one cannot extend to the case of closed manifolds homotopy equivalent to extended graph $4$-manifolds (with at least one pure-hyperbolic piece) without further work. This is due to the fact that the non-existence result for Einstein metrics \cite[Theorem 1]{DiC22} holds true for $4$-manifolds \emph{diffeomorphic} to extended graph $4$-manifolds.
\end{remark}

\begin{remark}\label{necessary}
Note that Theorem \ref{graph} is no longer true if there are no pure real-hyperbolic pieces. In this case the Euler characteristic is zero, and it is easy to construct extended graph $4$-manifolds with $\chi=0$ supporting a complex structure. For example, any product of a higher genus closed surface with the two dimensional torus can be thought of as an extended graph $4$-manifold and it clearly supports a product complex structure.
\end{remark}

\begin{remark}\label{almostgraph}
Just as in Remark \ref{almostreal}, we can construct many extended graph 4-manifolds with positive Euler characteristic which support almost complex structures. The construction goes as follows.

First recall that a closed oriented 4-manifold $X$ admits an almost complex structure $J$, consistent with the orientation, if and only if there is a class $c \in H^2(X; \mathbb{Z})$ such that $c\equiv w_2(X) \bmod 2$ and $\langle c^2, [X]\rangle = 2\chi(X) + 3\tau(X)$. Suppose now that $X$ is spin (i.e., $w_2(X) = 0$), with $b_2(X) > 0$, and $\tau(X) = 0$. Since $X$ is spin, $c$ satisfies $c\equiv w_2(X) \bmod 2$ if and only if $c = 2\gamma$ for some $\gamma \in H^2(X; \mathbb{Z})$. So $\langle c^2, [X]\rangle = \langle (2\gamma)^2, [X]\rangle = 4\langle\gamma^2, [X]\rangle \in 8\mathbb{Z}$ as the intersection form of $X$ is even. Therefore, if $\langle c^2, [X]\rangle = 2\chi(X) + 3\tau(X) = 2\chi(X)$, we must have $4 \mid \chi(X)$. Conversely, the intersection form of $X$ (the bilinear form on $H^2(X; \mathbb{Z})/\text{torsion}$ which is induced by cup product), is isomorphic to
\[
\begin{bmatrix}
0 & 1 & & & & &\\
1 & 0 & & & & &\\
 & & 0 & 1 & & &\\
 & & 1 & 0 & & &\\
 & & & & \ddots & & \\
 & & & & & 0 & 1\\
 & & & & & 1 & 0
\end{bmatrix}
\]
by \cite[Theorem II.5.3]{MH73}. So for any even integer $m$, there is $\gamma \in H^2(X; \mathbb{Z})$ with $\langle\gamma^2, [X]\rangle = m$. Taking $m = \frac{1}{2}\chi(X)$, we see that $\langle c^2, [X]\rangle = 2\chi(X)$. That is, if $X$ is spin, $b_2(X) > 0$, and $\tau(X) = 0$, then $X$ admits an almost complex structure if and only $4\mid \chi(X)$.

Now select a finite volume real-hyperbolic manifold $M$ with Euler characteristic four -- such examples are constructed in the paper of Ratcliffe and Tschantz \cite{RT00}. By Sullivan's result \cite{Sul79} (see also \cite[Theorem 4.1]{LR20}), there exists a finite regular cover $M'$ which is spin and with $4 \mid \chi(M')$. Let $X$ be the spin double of $M'$ obtained by chopping the cusps of $M'$ and gluing with another copy equipped with the reverse orientation. Then $X$ is spin with $4\mid \chi(X)$. By Lemma \ref{Euler}, we also have $\chi(X) \neq 0$ and $\tau(X) = 0$. As in Remark \ref{almostreal}, we see that $X$ admits almost complex structures, but it cannot support a complex structure by Theorem \ref{graph}.

We believe that the proof of Kotschick mentioned in Remark \ref{almostreal} extends to the graph manifold case to give the analogous statement\footnote{As was mentioned earlier, the proof has since been published \cite{Kot23}, and indeed it extends to the graph manifold case.}. That said, we do not feel comfortable trying to reproduce or interpret his private communication to us, and we decide to leave our original (possibly weaker) result unchanged. Indeed, the main point of this remark is simply to show that almost complex extended graph $4$-manifolds with positive Euler characteristic exist in profusion.

\end{remark}
\vspace{0.2in}


\begin{thebibliography}{ELMNPM}

\bibitem[AL20]{AL20} I. Agol, F. Lin. Hyperbolic four-manifolds with vanishing Seiberg-Witten invariants. \textit{Characters in Low-Dimensional Topology}, 1--8, Contemp. Math., 760, Centre Rech. Math. Proc., \textit{Amer. Math. Soc., [Providence], RI}, 2020.

\bibitem[Aub76]{Aub76} T. Aubin. Equations du type Monge-Amp\`ere sur les vari\'et\'es k\"ahl\'eriennes compactes. \textit{C. R. Acad. Sci. Paris 283A} (1995), 119-121.

\bibitem[BHPV04]{BHPV04} W. P. Barth, K. Hulek, C. A. M. Peters, A. Van de Ven. Compact complex surfaces, 2nd ed., \textit{Ergebnisse der Mathematik und ihrer Grenzgebiete. 3. Folge}. A Series of Modern Surveys in Mathematics [Results in Mathematics and Related Areas. 3rd Series. A Series of Modern Surveys in Mathematics], vol. 4, Springer-Verlag, Berlin, 2004. 

\bibitem[BCG95]{BCG95} G. Besson, G. Courtois, S. Gallot. Entropies and rigidit\'es des espaces localment sym\'etriques de curbure strictment n\'egative. \textit{Geom. Func. Anal. 5} (1995), 731-799.

\bibitem[BCG96]{BCG96} G. Besson, G. Courtois, S. Gallot.  Minimal entropy and Mostow's rigidity theorems. \textit{Ergodic Theory Dynam. Systems 16} (1996), no. 4,  623–649.



\bibitem[CG85]{CG85} J. Cheeger, M. Gromov. Bounds on the von Neumann dimension of $L^2$-cohomology and the Gauss-Bonnet theorem for open manifolds. \textit{J. Differential Geom. 21} (1985), no. 1, 1-34.

\bibitem[CM05]{CM05} M. Conder, C. Maclachlan. Compact hyperbolic 4-manifolds of small volume. \textit{Proc. Amer. Math. Soc.} 133 (2004), no. 8, 2469--2476.

\bibitem[CT97]{CT97} J. A. Carlson, D. Toledo. On fundamental group of class VII surfaces. \textit{Bull. London Math. Soc. 29} (1997), no. 1, 98-102.


\bibitem[DiC24]{DiC22} L. F. Di Cerbo. Extended Graph 4-Manifolds, and Einstein Metrics. \textit{Ann. Math. Qu\'e. 48} (2024), no. 1, 269-276.

\bibitem[ES64]{ES64} J. Eells, J. H. Sampson. Harmonic mappings of Riemannian manifolds. \textit{Amer. J. Math. 86} (1964), 109-160.


\bibitem[FLS15]{LafontBook} R. Frigerio, J.-F. Lafont, A. Sisto. Rigidity of High Dimensional Graph Manifolds. \textit{Ast\'erisque Volume 372}, {2015}.

\bibitem[Her01]{Her01} L. Hern\'andez-Lamoneda. Non-positively curved $3$-manifolds with non-K\"ahler $\pi_1$. \textit{C. R. Acad. Sci. Paris Sér. I Math. 332} (2001), no. 3, 249–252.  



\bibitem[Kot92]{Kot92} D. Kotschick. Remarks on geometric structures on complex surfaces. \textit{Topology 31} (1992), no. 2, 317-321.

\bibitem[Kot23]{Kot23} D. Kotschick. Almost complex structures on hyperbolic manifolds. \textit{Math. Z. 305}, no. 9 (2023). 

\bibitem[LeB02]{LeB02} C. LeBrun. Hyperbolic manifolds, harmonic forms, and Seiberg-Witten invariants. \textit{Geom. Dedicata} 91 (2002), 137--154.

\bibitem[Lo08]{Lo08} C. Long. Small volume closed hyperbolic 4‐manifolds. \textit{Bull. Lond. Math. Soc.} 40 (2008), no. 5, 913--916.

\bibitem[LR20]{LR20} D. Long, A. Reid. Virtually spinning hyperbolic manifolds. \textit{Proc. Edinb. Math. Soc. (2)} 63 (2020), no. 2, 305--313.

\bibitem[MH73]{MH73} J. Milnor, D. Husemoller. Symmetric bilinear forms. Ergebnisse der Mathematik und ihrer Grenzgebiete, Band 73. \textit{Springer-Verlag, New York-Heidelberg}, 1973. viii+147 pp.


\bibitem[RT00]{RT00} J. Ratcliffe, S. Tschantz. The volume spectrum of hyperbolic 4-manifolds. \textit{Experiment. Math.} 9 (2000), no. 1, 101--125.

\bibitem[Sam86]{Sam86} J. H. Sampson. Applications of harmonic maps to Kähler geometry. \textit{Complex differential geometry and nonlinear differential equations (Brunswick, Maine, 1984)}, 125–134, Contemp. Math., 49, Amer. Math. Soc., Providence, RI, 1986. 




\bibitem[Sul79]{Sul79} D. Sullivan. Hyperbolic geometry and homeomorphisms. Geometric topology (\textit{Proc. Georgia Topology Conf., Anthens, Ga., 1977}), pp. 543--555, \textit{Academic Press, New York-London}, 1979.

\bibitem[Wal85]{Wal85} C. T. C. Wall. Geometric structures on compact complex analytic surfaces. \textit{Topology 25} (1986), no. 2, 119-153.

\bibitem[Yau78]{Yau78} S.-T. Yau. On the Ricci curvature of a compact K\"ahler manifold and the complex Monge-Amp\`ere equation. \textit{Comm. Pure Appl. Math. 31} (1978), no. 3, 339-411.

\end{thebibliography}
\end{document}